\newcommand{\Comp}{\mathsf{Comp}}
\newcommand{\pr}{\mathrm{pr}}
\newcommand{\A}{\mathcal A}
\newcommand{\C}{\mathcal C}
\newcommand{\R}{\mathbb R}
\newcommand{\B}{\mathbb B}
\newtheorem{theorem}{Theorem}
\newtheorem{lemma}{Lemma}
\newtheorem{remark}{Remark}
\title{Equilibrium under uncertainty with Sugeno payoff}
\author{Taras Radul}
\begin{document}
\maketitle

Institute of Mathematics, Casimirus the Great University, Bydgoszcz, Poland;
\newline
Department of Mechanics and Mathematics, Lviv National University,
Universytetska st.,1, 79000 Lviv, Ukraine.
\newline
e-mail: tarasradul\@ yahoo.co.uk

\textbf{Key words and phrases:}  equilibrium under uncertainty,  capacity, Sugeno integral

\begin{abstract} This paper studies n-player games where players' beliefs about their opponents'
behaviour are capacities. The concept of an equilibrium
under uncertainty was introduced J.Dow and S.Werlang (J Econ. Theory {\bf 64} (1994) 205--224))  for two players and was extended  to n-player
games by  J.Eichberger and D.Kelsey (Games Econ. Behav. {\bf 30} (2000) 183--215).
  Expected utility was expressed by Choquet
integral.  We consider the concept of an equilibrium
under uncertainty in this paper but with expected utility  expressed by Sugeno
integral.  Existence of such an equilibrium is demonstrated using some abstract non-linear convexity on the space of capacities.
\end{abstract}

\section{Introduction}

The classical Nash equilibrium theory is based on fixed point theory and was developed in frames of linear convexity. The mixed strategies of a player are probability (additive) measures on a set of pure strategies. But an interest to Nash equilibria in more general frames is rapidly growing in last decades. There are also  results about  Nash equilibrium  for non-linear convexities. For instance, Briec and Horvath proved in  \cite{Ch} existence of Nash equilibrium point for $B$-convexity and MaxPlus convexity. Let us remark that MaxPlus convexity is
related to idempotent (Maslov) measures in the same sense as linear convexity is related to probability measures.

We can use additive measures only when we know precisely probabilities of all events considered in a game. However it is not a case
in many modern economic models. The decision theory under uncertainty considers a model when probabilities of states are either not known or imprecisely specified. Gilboa \cite{Gil} and Schmeidler  \cite{Sch} axiomatized  expectations expressed by Choquet
integrals attached to non-additive measures called capacities, as a formal approach to decision-making under uncertainty. Dow and Werlang \cite{DW} used this approach for two players game where belief of each player about a choice of the strategy by the other player is a capacity. They introduced some equilibrium notion for such games and proved its existence.  This result was extended onto games with arbitrary finite number of players \cite{EK}.

Kozhan and Zaricznyi introduced in \cite{KZ} a formal mathematical concept of Nash equilibrium of a game where players are allowed to form non-additive beliefs about opponent's decision but also to play their mixed non-additive strategies. Such game is called by authors game in capacities. The expected payoff function was there defined using a Choquet integral.  Kozhan and Zaricznyi proved existence theorem using a linear convexity on the space of capacities which is preserved by Choquet integral.

An alternative to so-called Choquet expected utility model is the qualitative decision theory.   The corresponding expected utility is expressed by Sugeno integral. See for example papers \cite{DP}, \cite{DP1}, \cite{CH1}, \cite{CH} and others.  Sugeno integral chooses a median value of utilities which is qualitative counterpart of the averaging operation by Choquet integral. It was  introduced in \cite{RN} the general mathematical concept of Nash equilibrium of a game in capacities with expected payoff function defined by Sugeno integral. To prove existence theorem for this  case, it was considered some non-linear convexity on the space of capacities generated by capacity monad structure.

It was noticed in \cite{KZ} that "there is no direct interpretation of the game in non-additive mixed strategies". So, formal mathematical concept of Nash equilibrium for capacities considered in \cite{KZ} and \cite{RN} has rather theoretical character.
We consider in this paper the equilibrium notion from \cite{DW} and \cite{EK} for a game  with expected payoff function defined by Sugeno integral. We prove existence of such equilibrium using above mentioned convexity on the space of capacities.

\section{Games with non-additive beliefs} By $\Comp$ we denote the category of compact Hausdorff
spaces (compacta) and continuous maps. For each compactum $X$ we denote by $C(X)$ the Banach space of all
continuous functions on $X$ with the usual $\sup$-norm. In what follows, all
spaces and maps are assumed to be in $\Comp$ except for $\R$ and
maps in sets $C(X)$ with $X$ compact Hausdorff.

We need the definition of capacity on a compactum $X$. We follow a terminology of \cite{NZ}.
A function $\nu$ which assign each closed subset $A$ of $X$ a real number $\nu(A)\in [0,1]$ is called an {\it upper-semicontinuous capacity} on $X$ if the three following properties hold for each closed subsets $F$ and $G$ of $X$:

1. $\nu(X)=1$, $\nu(\emptyset)=0$,

2. if $F\subset G$, then $\nu(F)\le \nu(G)$,

3. if $\nu(F)<a$, then there exists an open set $O\supset F$ such that $\nu(B)<a$ for each compactum $B\subset O$.

We extend a capacity $\nu$ to all open subsets $U\subset X$ by the formula $\nu(U)=\sup\{\nu(K)\mid K$ is a closed subset of $X$ such that $K\subset U\}$.

It was proved in \cite{NZ} that the space $MX$ of all upper-semicontinuous  capacities on a compactum $X$ is a compactum as well, if a topology on $MX$ is defined by a subbase that consists of all sets of the form $O_-(F,a)=\{c\in MX\mid c(F)<a\}$, where $F$ is a closed subset of $X$, $a\in [0,1]$, and $O_+(U,a)=\{c\in MX\mid c(U)>a\}$, where $U$ is an open subset of $X$, $a\in [0,1]$. Since all capacities we consider here are upper-semicontinuous, in the following we call elements of $MX$ simply capacities.

There is considered in \cite{KZ} a tensor product for capacities, which is a continuous map $\otimes:MX_1\times MX_2\to M(X_1\times X_2)$ such that for each $i\in\{1,2\}$ we have $M(p_i)\circ\otimes= \pr_i$ where $p_i:X_1\times X_2\to X_i$ and $\pr_i:MX_1\times MX_2\to MX_i$ are natural projections. This definition is based on the capacity monad structure.  We give there a direct formulae for evaluating  tensor product of capacities. For $\mu_1\in MX_1$, $\mu_2\in MX_2$ and $B\subset X_1\times X_2$ we put $\mu_1\otimes\mu_2(B)=\sup\{t\in[0,1]\mid\mu_1(\{x\in X_1\mid \mu_2(p_2((\{x\}\times X_2)\cap B))\ge t\}\ge t\}$.  Note that, despite the space of capacities contains the space of probability measures,  the tensor product of capacities does not extend tensor product of probability measures. It was noticed in \cite{KZ} that we can extend the definition of tensor product to any finite number of factors by induction.

\begin{lemma}\label{P} Let $\A_i$ be a closed subset of a compactum $X_i$ and $\mu_i\in MX_i$ such that $\mu_i(X_i\setminus A_i)=0$ for each $i\in \{1,\dots,n\}$. Then $\otimes_{i=1}^n\mu_i(\prod_{i=1}^n X_i\setminus\prod_{i=1}^n A_i)=0$.
\end{lemma}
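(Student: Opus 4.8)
The plan is to proceed by induction on $n$. For $n=1$ there is nothing to do: $\otimes_{i=1}^1\mu_i=\mu_1$ and the assertion is the hypothesis $\mu_1(X_1\setminus A_1)=0$. For the inductive step I invoke the inductive definition of the tensor product. Under the canonical homeomorphism $\prod_{i=1}^nX_i\cong\bigl(\prod_{i=1}^{n-1}X_i\bigr)\times X_n$ one has $\otimes_{i=1}^n\mu_i=\nu\otimes\mu_n$ with $\nu=\otimes_{i=1}^{n-1}\mu_i$, and the set $\prod_{i=1}^nA_i$ corresponds to $B\times A_n$ with $B=\prod_{i=1}^{n-1}A_i$. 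Putting $Y=\prod_{i=1}^{n-1}X_i$, $X=X_n$, $A=A_n$, $\mu=\mu_n$, the induction hypothesis says $\nu(Y\setminus B)=0$ (note $Y\setminus B$ is open, so this refers to the extension of $\nu$ to open sets). Thus it suffices to prove the following two-factor statement: if $\nu\in MY$, $\mu\in MX$, $B\subseteq Y$ and $A\subseteq X$ are closed, $\nu(Y\setminus B)=0$ and $\mu(X\setminus A)=0$, then $\nu\otimes\mu\bigl((Y\times X)\setminus(B\times A)\bigr)=0$.

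To prove this I apply the explicit formula for $\otimes$ to the (open) set $C=(Y\times X)\setminus(B\times A)$. The key point is the description of the fibres: for $y\in Y$ we have $D_y:=p_2\bigl((\{y\}\times X)\cap C\bigr)=\{x\in X\mid(y,x)\notin B\times A\}$, and since $(y,x)\in B\times A$ is equivalent to ``$y\in B$ and $x\in A$'', we get $D_y=X$ when $y\notin B$ and $D_y=X\setminus A$ when $y\in B$. Hence $\mu(D_y)=\mu(X)=1$ for $y\notin B$ and $\mu(D_y)=\mu(X\setminus A)=0$ for $y\in B$. Consequently, for every $t\in(0,1]$ the set $\{y\in Y\mid\mu(D_y)\ge t\}$ equals $Y\setminus B$, so $\nu(\{y\in Y\mid\mu(D_y)\ge t\})=\nu(Y\setminus B)=0<t$ and the condition defining the supremum fails; it holds only at $t=0$, where the relevant set is all of $Y$ and $\nu(Y)=1\ge0$. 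Therefore $\nu\otimes\mu(C)=\sup\{0\}=0$, which closes the induction.

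Almost everything here is bookkeeping: the identification of the iterated tensor product with the two-factor one through the product homeomorphism, and the consistent use of the open-set extension of capacities (both the hypothesis $\mu_i(X_i\setminus A_i)=0$ and the conclusion refer to capacities evaluated on non-closed sets). I expect the only delicate point to be the legitimacy of applying the explicit tensor-product formula to the non-closed set $C$; if one wishes to sidestep this, one may instead use that $\nu\otimes\mu$ is itself a capacity, write $\nu\otimes\mu(C)=\sup\{\nu\otimes\mu(K)\mid K\subseteq C\ \text{closed}\}$, and bound each $\nu\otimes\mu(K)$ by the same fibrewise estimate together with monotonicity of $\mu$ and $\nu$. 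In either formulation the bound collapses to $0$, and there is no genuine obstacle.
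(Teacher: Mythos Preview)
Your argument is correct and follows essentially the same route as the paper: induction reducing to the two-factor case, then a fibrewise computation showing that for $y\in B$ the fibre lies in $X\setminus A$ and hence has $\mu$-value $0$, so the set $\{y:\mu(D_y)\ge t\}$ is contained in $Y\setminus B$ and gets $\nu$-value $0<t$. The paper avoids your worry about applying the formula to a non-closed set by doing exactly what you propose in your final paragraph---it fixes a compact $K\subset C$ from the start and computes $\nu\otimes\mu(K)$---so your ``alternative'' formulation is in fact the paper's primary one.
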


\begin{proof} Consider the case $n=2$.   Let $B$ any compact subset of $(X_1\times X_2)\setminus(A_1\times A_2)$. For any $t>0$ consider the set $K_t=\{x\in X_1\mid \mu_2(p_2((\{x\}\times X_2)\cap B))\ge t\}$. If $x\in A_1$, then $p_2((\{x\}\times X_2)\cap B)\subset X_2\setminus A_2$, hence $x\notin K_t$. Thus $K_t\subset X\setminus A_1$ and we obtain $\mu_1\otimes\mu_2(B)=0$.

The general case could be obtained by induction.
\end{proof}

Let us describe the Sugeno integral with respect to a capacity $\mu\in MX$. Fix any increasing homeomorphism $\psi:(0,1)\to\R$. We put additionally $\psi(0)=-\infty$, $\psi(1)=+\infty$ and assume $-\infty<t<+\infty$ for each $t\in\R$. We consider for each  function $f\in  C(X)$ an integral defined by the formulae
$$\int_X^{Sug} fd\mu=\max\{t\in\R\mid \mu(f^{-1}([t,+\infty)))\ge\psi^{-1}(t)\}$$

 Let us remark that we use some modification from \cite{R2} of Sugeno integral. The original Sugeno integral \cite{Su} "ignores" function values outside the interval $[0,1]$ and we introduce a "correction" homeomorphism $\psi$ to avoid this problem.

Now, we are going to introduce notion of  equilibrium under uncertainty for games where belief of each player about a choice of the strategy by the other player is a capacity. We follow definitions and denotation from \cite{EK} with the only difference that we use the Sugeno integral for expected payoff instead  the Choquet integral.

 We consider a $n$-players game $f:X=\prod_{i=1}^n X_i\to\R^n$ with compact Hausdorff spaces of strategies $X_i$. The coordinate function $p_i:X\to \R$ we call payoff function of $i$-th player. For $i\in\{1,\dots,n\}$ we denote by  $X_{-i}=\prod_{j\neq i} X_j$  the set of strategy combinations which players other
than $i$ could choose. For $x\in X$ the corresponding point in $X_{-i}$ we denote by $x_{-i}$.  In contrast to standard game theory, beliefs of $i$-th player about opponents'
behaviour are represented by non-additive measures (or capacities) on $X_{-i}$.

For $i\in\{1,\dots,n\}$  we consider the expected payoff function $P_i:X_i\times MX_{-i}\to\R$ defined as follows $P_i(x_i,\nu)=\int_{X_{-i}}^{Sug} p_{i}^{x_i}d\nu$ where the function $p_{i}^{x_i}:X_{-i}\to\R$ is defined by the formulae $p_{i}^{x_i}(x_{-i})=p_{i}(x_i,x_{-i})$, $x_i\in X_i$ and $\nu\in MX_{-i}$.

We are going to prove continuity of $P_i$. We will need some notations and a  technical lemma. Let $f:X\times Y\to\R$ be a  function. Consider any $x\in X$ and $t\in\R$. Denote $A^x_{\le t}=\{y\in Y\mid f(x,y)\le t\}$. We also will use analogous notations $A^x_{\ge t}$, $A^x_{<t}$ and $A^x_{>t}$.

\begin{lemma}\label{C0} Let $f:X\times Y\to\R$ be a continuous function on the product $X\times Y$ of compacta $X$ and $Y$. Then for each $x\in X$, $t\in\R$ and $\delta>0$ there exists an open neighborhood $O$ of $x$ such that $A^z_{\le t}\subset A^x_{<t+\delta}$ ($A^z_{\ge t}\subset A^x_{>t-\delta}$) for each $z\in O$.
\end{lemma}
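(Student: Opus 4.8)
The plan is to prove Lemma~\ref{C0} by a standard compactness argument on the slice $\{x\}\times Y$. First I would fix $x\in X$, $t\in\R$ and $\delta>0$, and focus on the first inclusion (the second follows by replacing $f$ with $-f$, or by an entirely symmetric argument). The goal is an open neighbourhood $O\ni x$ such that $f(z,y)\le t$ implies $f(x,y)<t+\delta$ for all $z\in O$, $y\in Y$. Equivalently, I want: for every $y$ with $f(x,y)\ge t+\delta$ and every $z\in O$, one has $f(z,y)>t$.

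The key step is a cover-and-extract argument. Consider the closed set $K=A^x_{\ge t+\delta}=\{y\in Y\mid f(x,y)\ge t+\delta\}$, which is compact as a closed subset of the compactum $Y$. For each $y\in K$ we have $f(x,y)\ge t+\delta>t$, so by continuity of $f$ at $(x,y)$ there are open neighbourhoods $U_y\ni x$ in $X$ and $V_y\ni y$ in $Y$ with $f(z,y')>t$ for all $(z,y')\in U_y\times V_y$. The family $\{V_y\mid y\in K\}$ covers $K$; by compactness extract a finite subcover $V_{y_1},\dots,V_{y_m}$, and set $O=\bigcap_{k=1}^m U_{y_k}$, a finite intersection of neighbourhoods of $x$, hence an open neighbourhood of $x$. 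Then for any $z\in O$ and any $y\in K$, pick $k$ with $y\in V_{y_k}$; since $z\in U_{y_k}$ we get $f(z,y)>t$. Contrapositively, if $z\in O$ and $f(z,y)\le t$, then $y\notin K$, i.e.\ $f(x,y)<t+\delta$, which is exactly $A^z_{\le t}\subset A^x_{<t+\delta}$.

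For the parenthetical statement $A^z_{\ge t}\subset A^x_{>t-\delta}$, I would run the mirror-image argument with the compact set $K'=A^x_{\le t-\delta}$: for each $y\in K'$, $f(x,y)\le t-\delta<t$, so continuity gives neighbourhoods on which $f<t$; finite subcover of $K'$ yields $O'$ with $f(z,y)<t$ for $z\in O'$, $y\in K'$, whence $z\in O'$ and $f(z,y)\ge t$ force $y\notin K'$, i.e.\ $f(x,y)>t-\delta$. Taking the intersection of the two neighbourhoods handles both inclusions simultaneously if desired.

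I do not expect a genuine obstacle here; the only mild subtlety is that $K$ (or $K'$) may be empty, in which case the inclusion is trivial and any neighbourhood of $x$ works, so the argument degenerates gracefully. The essential ingredients are just continuity of $f$ on the product and compactness of the fibre $Y$, both available by hypothesis.
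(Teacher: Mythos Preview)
Your proof is correct and follows essentially the same approach as the paper: both focus on the compact set $A^x_{\ge t+\delta}$ and use a tube-lemma/finite-cover compactness argument on the slice $\{x\}\times Y$ to produce the neighbourhood $O$. The only cosmetic difference is that the paper introduces an intermediate threshold $t+\tfrac{\delta}{2}$ and phrases the compactness step as finding product boxes inside level sets, whereas you spell out the finite-cover version of the tube lemma directly; your version is in fact slightly cleaner.
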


\begin{proof} Let us prove the first statement of the lemma. We have two compact sets $\{x\}\times A^x_{\le t}$ and $\{x\}\times A^x_{\ge t+\delta}$. Consider its open neighborhoods $V=\{(z,y)\in X\times Y\mid f(z,y)<t+\frac{\delta}{2}\}$ and  $U=\{(z,y)\in X\times Y\mid f(z,y)>t+\frac{\delta}{2}\}$. Since $\{x\}\times A^x_{\le t}$ and $\{x\}\times A^x_{\ge t+\delta}$ are compact, we can choose an open set $O\subset X$ and two open sets $W_1$, $W_2\subset Y$ such that $\{x\}\times A^x_{\le t}\subset O\times W_1\subset V$ and  $\{x\}\times A^x_{\ge t+\delta}\subset O\times W_2\subset U$. Consider any $z\in O$ and $y\in Y$ such that $f(z,y)\le t$. Then $y\notin W_2\supset A^x_{\ge t+\delta}$, hence $y\in A^x_{<t+\delta}$.

The proof of the second statement is the same.
\end{proof}

\begin{lemma}\label{C} The map $P_i$ is continuous.
\end{lemma}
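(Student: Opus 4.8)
The plan is to establish continuity at an arbitrary point $(x^0,\nu^0)\in X_i\times MX_{-i}$. Write $p_i^z\in C(X_{-i})$ for the section $x_{-i}\mapsto p_i(z,x_{-i})$, put $c=P_i(x^0,\nu^0)$, and fix $\e>0$. By definition $P_i(z,\nu)=\max\{t\in\R\mid \nu(A^z_{\ge t})\ge\psi^{-1}(t)\}$, where $A^z_{\ge t}=(p_i^z)^{-1}([t,+\infty))=\{x_{-i}\in X_{-i}\mid p_i(z,x_{-i})\ge t\}$; using property 3 of a capacity together with compactness of $X_{-i}$ one checks that this maximum is attained and lies in $[\min p_i,\max p_i]$, so in particular $\nu\bigl(A^z_{\ge P_i(z,\nu)}\bigr)\ge\psi^{-1}(P_i(z,\nu))$, and $c$ itself satisfies $\nu^0(A^{x^0}_{\ge c})\ge\psi^{-1}(c)$. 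It then suffices to produce one basic open neighbourhood of $(x^0,\nu^0)$ on which $P_i>c-\e$ and another on which $P_i<c+\e$; their intersection witnesses continuity at $(x^0,\nu^0)$, and since the point was arbitrary, $P_i$ is continuous.

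For the lower estimate pick reals $c-\e<t_0<t_0'<c$ and set $F=A^{x^0}_{\ge c}$ (closed) and $W=A^{x^0}_{>t_0'}$ (open), so $F\subset W$. Applying the first statement of Lemma~\ref{C0} to $p_i$ with parameter $\delta=t_0'-t_0$ and passing to complements in $X_{-i}$ yields an open neighbourhood $O$ of $x^0$ such that $W\subset A^{x^0}_{\ge t_0'}\subset A^z_{>t_0}\subset A^z_{\ge t_0}$ for all $z\in O$. Since $W$ is open and $A^z_{\ge t_0}$ is closed, the definition of $\nu$ on open sets gives $\nu(A^z_{\ge t_0})\ge\nu(W)$ for every $\nu\in MX_{-i}$ and every $z\in O$. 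Moreover $\nu^0(W)\ge\nu^0(F)\ge\psi^{-1}(c)>\psi^{-1}(t_0)$, so $\nu^0\in O_+(W,\psi^{-1}(t_0))$. For $(z,\nu)\in O\times O_+(W,\psi^{-1}(t_0))$ we obtain $\nu(A^z_{\ge t_0})\ge\nu(W)>\psi^{-1}(t_0)$, hence $t_0$ belongs to the set defining $P_i(z,\nu)$ and therefore $P_i(z,\nu)\ge t_0>c-\e$.

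For the upper estimate pick $t_1$ with $c<t_1<c+\e$; by maximality of $c$ we have $\nu^0(A^{x^0}_{\ge t_1})<\psi^{-1}(t_1)$, so we may choose $a$ with $\nu^0(A^{x^0}_{\ge t_1})<a<\psi^{-1}(t_1)$, and property 3 for $\nu^0$ gives an open $V\supset A^{x^0}_{\ge t_1}$ with $\nu^0(B)<a$ for every compactum $B\subset V$. Since $\bigcap_{\delta>0}A^{x^0}_{\ge t_1-\delta}=A^{x^0}_{\ge t_1}\subset V$ and the closed sets $A^{x^0}_{\ge t_1-\delta}$ decrease with $\delta$, compactness of $X_{-i}$ gives some $\delta>0$ with $F':=A^{x^0}_{\ge t_1-\delta}\subset V$; then $\nu^0(F')<a$, i.e.\ $\nu^0\in O_-(F',a)$. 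By the second statement of Lemma~\ref{C0} there is an open neighbourhood $O'$ of $x^0$ with $A^z_{\ge t_1}\subset A^{x^0}_{>t_1-\delta}\subset F'$ for all $z\in O'$. Hence for $(z,\nu)\in O'\times O_-(F',a)$ and every $s\ge t_1$ we get $\nu(A^z_{\ge s})\le\nu(A^z_{\ge t_1})\le\nu(F')<a<\psi^{-1}(t_1)\le\psi^{-1}(s)$, so no $s\ge t_1$ lies in the set defining $P_i(z,\nu)$ and therefore $P_i(z,\nu)<t_1<c+\e$.

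The genuinely delicate point is the asymmetry of the subbase of $MX_{-i}$: the sets $O_-(F,a)$ control closed sets only from above, while the sets $O_+(U,a)$ control open sets only from below, and the relevant sets $A^z_{\ge t}$ move with $z$. This forces one to sandwich the varying closed sets $A^z_{\ge t_0}$ from inside by a fixed open set $W$ (whence the auxiliary level $t_0'$ and the use of $O_+$), and the varying closed sets $A^z_{\ge t_1}$ from outside by a fixed closed set $F'$ (whence the use of $O_-$). Producing these uniform sandwiches over a whole neighbourhood of $x^0$ is exactly what Lemma~\ref{C0} and compactness of $X_{-i}$ supply, and reconciling the ``$F'$ from outside'' step with the failure of continuity from above for capacities is exactly what property 3 supplies.
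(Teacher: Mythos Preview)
Your proof is correct and follows essentially the same approach as the paper's: establish continuity at an arbitrary point by producing separate neighbourhoods for the upper and lower estimates, using Lemma~\ref{C0} to control the variation in $z$ and subbasic open sets in $MX_{-i}$ to control the variation in $\nu$. If anything, your version is a bit more careful than the paper's, since you work exclusively with genuine subbasic neighbourhoods $O_+(W,\cdot)$ (with $W$ open) and $O_-(F',\cdot)$ (with $F'$ closed), explicitly invoking property~3 and compactness to manufacture the needed slack.
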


\begin{proof} Consider any $x\in X_i$ and $\nu_0\in MX_{-i}$ and put $P_i(x,\nu_0)=t\in\R$. Consider any $\varepsilon>0$. By Lemma \ref{C0} we can choose a neighborhood $O_1$ of $x$ such that for each $z\in O_1$ we have $A^z_{\ge t+\frac\varepsilon2}\subset A^x_{> t+\frac\varepsilon4}$. Put $V_1=\{\nu\in M(X_{-i}\mid \nu(A^x_{\ge t+\frac\varepsilon4})<\psi^{-1}(t+\frac\varepsilon4)\}$, then $V_1$ is a neighborhood of $\nu_0$.

We also can choose a neighborhood $O_2$ of $x$ such that for each $z\in O_2$ we have $A^z_{\le t-\frac\varepsilon2}\subset A^x_{<t-\frac\varepsilon4}$. Put $V_2=\{\nu\in M(X_{-i}\mid \nu(A^x_{\ge t-\frac\varepsilon4})>\psi^{-1}(t-\frac\varepsilon2)\}$, then $V_2$ is a neighborhood of $\nu_0$.

Put $O=O_1\cap O_2$ and $V=V_1\cap V_2$. Consider any $(z,\nu)\in O\times V$. Since $(z,\nu)\in O_1\times V_1$, we have $\nu(A^z_{\ge t+\frac\varepsilon2})\le\nu(A^x_{> t+\frac\varepsilon4})\le\nu(A^x_{\ge t+\frac\varepsilon4})<\psi^{-1}(t+\frac\varepsilon4)<\psi^{-1}(t+\frac\varepsilon2)$. Hence $P_i(z,\nu)<t+\varepsilon$.

On the other hand, since $(z,\nu)\in O_2\times V_2$, we have $\nu(A^z_{\ge t-\frac\varepsilon2})\ge\nu(A^z_{>t-\frac\varepsilon2})=\nu(X_{-i}\setminus A^z_{\le t-\frac\varepsilon2})\ge\nu(X_{-i}\setminus A^x_{<t-\frac\varepsilon4})=\nu(A^x_{\ge t-\frac\varepsilon4})>\psi^{-1}(t-\frac\varepsilon2)$. Hence $P_i(z,\nu)>t-\varepsilon$ and the map $P_i$ is continuous.
\end{proof}

For $\nu_i\in M(X_{-i})$ denote by $R_i=\{x\in X_i\mid P_i(x,\nu_i)=\max\{P_i(z,\nu_i)\mid z\in X_i\}$ the best response correspondence
of player $i$ given belief $\nu_i$. The set $R_i$ is well defined and compact by Lemma \ref{C}.

A belief system $(\nu_1,\dots,\nu_n)$, where $\nu_i\in M(X_{-i})$, is called {\it an equilibrium under uncertainty with Sugeno payoff} if for all $i$ we have $\nu_i(X_{-i}\setminus\prod_{j\ne i}R_j)=0$.

    The main goal of this paper is to prove the existence of such equilibrium. Since Sugeno integral does not preserve linear convexity on $MX$, we can not use methods from \cite{DW} and \cite{EK}. We will use some another natural convexity structure on the space of capacities which has the binarity property (has Helly number 2).

\section{Binary convexity on the space of capacities}

Consider a compactum $X$. There exists a natural lattice structure on $MX$ defined as follows $\nu\vee\mu(A)=\max\{\nu(A),\mu(A)\}$ and $\nu\wedge\mu(A)=\min\{\nu(A),\mu(A)\}$ for each closed subset $A\subset X$ and $\nu$, $\mu\in MX$. The lattice $MX$ is a compact complete sublattice of the lattice $[0,1]^\tau$ with natural coordinate-wise operations.
The lattice $MX$ has a greatest  element and a a least element defined as $\mu_{1X}(A)=1$ for each $A\neq \emptyset$, $\mu_{1X}(\emptyset)=0$ and $\mu_{0X}(A)=0$ for each $A\neq X$, $\mu_{0X}(X)=1$.

By convexity on $MX$ we mean any  family $\C$ of closed subsets which is stable for intersection
and contains $MX$ and the empty set. Elements of $\C$ are called
$\C$-convex (or simply convex). See \cite{vV} for more information about abstract convexities.

A convexity $\C$ on $MX$ is called $T_2$ if for each distinct $x_1$, $x_2\in
MX$ there exist $S_1$, $S_2\in\C$ such that $S_1\cup S_2=X$,
$x_1\notin S_2$ and $x_2\notin S_1$. Let $\L$ be a family of subsets of a compactum $X$. We say that  $\L$ is {\it linked} if the intersection of every  two elements is non-empty. A convexity $\C$ is called {\it binary} if the intersection of every  linked subsystem of $\C$ is non-empty.

For $\nu$, $\mu\in MX$ we denote $[\nu,\mu]=\{\alpha\in MX\mid \nu\wedge\mu\le\alpha\le\nu\vee\mu\}$. It is easy to see that $[\nu,\mu]$ is a closed subset of $MX$. We consider on $MX$ a convexity $\C_X=\{[\nu,\mu]\mid \nu, \mu\in MX\}$.

\begin{lemma}\label{Bin} The convexity $\C_X$ is binary.
\end{lemma}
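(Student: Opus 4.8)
The plan is to recognise that $\C_X$ is simply the convexity of order intervals of the complete lattice $MX$ and then to use the "box"-like behaviour of such intervals. Writing, for $\nu,\mu\in MX$, $a=\nu\wedge\mu$ and $b=\nu\vee\mu$, we have $[\nu,\mu]=\{\alpha\in MX\mid a\le\alpha\le b\}$, an order interval with $a\le b$ (and conversely every order interval arises this way, taking $\nu=a$, $\mu=b$). So the task reduces to showing that an arbitrary linked family of order intervals of $MX$ has non-empty intersection. The empty family is trivial (its intersection is $MX$), so assume the family is non-empty.

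First I would extract the consequence of linkedness. Let $\{[\nu_j,\mu_j]\}_{j\in J}$ be linked and put $a_j=\nu_j\wedge\mu_j$, $b_j=\nu_j\vee\mu_j$. For any $j,k\in J$, choosing $\alpha\in[\nu_j,\mu_j]\cap[\nu_k,\mu_k]$ gives $a_j\le\alpha\le b_k$, hence $a_j\le b_k$: every left endpoint lies below every right endpoint. Next I would produce the common point. Since $MX$ is a compact complete sublattice of $[0,1]^\tau$, the join $a:=\bigvee_{j\in J}a_j$ exists in $MX$. For each fixed $k\in J$, the inequalities $a_j\le b_k$ for all $j$ yield $a\le b_k$, while $a_k\le a$ holds by definition of the join. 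Thus $a_k\le a\le b_k$, i.e. $a\in[\nu_k,\mu_k]$; as this holds for every $k$, we get $a\in\bigcap_{j\in J}[\nu_j,\mu_j]\neq\emptyset$, so $\C_X$ is binary.

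I do not expect a genuine obstacle here: the whole argument is short once one sees that an order interval meets another precisely when its bottom lies below the other's top, which is exactly what forces Helly number $2$. The one point I would take care to state explicitly is that the candidate common point $a=\bigvee_j a_j$ actually belongs to $MX$ — that is, I would invoke the completeness of the lattice $MX$ recorded just above — since that is the only external fact the proof uses.
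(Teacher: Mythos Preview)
Your proof is correct. You identify $\C_X$ with the family of order intervals of the complete lattice $MX$, observe that linkedness of a family $\{[a_j,b_j]\}_{j\in J}$ (with $a_j\le b_j$) forces $a_j\le b_k$ for every pair $j,k$, and then exhibit $\bigvee_{j}a_j$ as a common point, using only the completeness of $MX$ stated just before the lemma. All steps are sound.

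The paper argues differently. It first invokes an external result (Proposition~2.1 of \cite{RC}) to reduce binarity to the three-element case, and then, for three intervals $[\mu_i,\nu_i]$ with $\mu_i\le\nu_i$, shows by a coordinate-wise contradiction that $\mu_1\vee\mu_2\vee\mu_3\le\nu_1\wedge\nu_2\wedge\nu_3$. Your route is more self-contained and more general: it avoids the cited reduction, treats an arbitrary linked family in one stroke, and actually names the common element. The paper's route, by contrast, never appeals to the completeness of $MX$ (only to finite lattice operations), at the cost of importing a Helly-type reduction from another paper. In effect the two proofs trade one external input for another: you use that $MX$ is a complete lattice, the paper uses that Helly number~$2$ implies binarity for compact convexities.
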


\begin{proof} Let $\B$ is a linked subfamily of $\C$. It is enough to prove that intersection of every three elements of $\B$ is not empty by Proposition 2.1 from \cite{RC}. Consider any $[\mu_1,\nu_1]$, $[\mu_2,\nu_2]$, $[\mu_3,\nu_3]\in\B$. We can suppose that $\mu_i\le\nu_i$ for each $i\in\{1,2,3\}$. We denote by $\nu_A=\wedge\{\nu_i\mid i\in A\}$ and $\mu_A=\vee\{\mu_i\mid i\in A\}$ for each $A\subset\{1,2,3\}$. It is enough to prove that $\mu_{123}(B)\le\nu_{123}$.

Suppose the contrary then there  exists a closed set $B\subset X$ such that $\nu_{123}(B)<\mu_{123}(B)$.  We can choose $i\in\{1,2,3\}$ such that $\nu_{123}(B)<\mu_{i}(B)$.
Without loss of generality we can suppose that $i=1$. Since the family $\{[\mu_1,\nu_1]$, $[\mu_2,\nu_2]$, $[\mu_3,\nu_3]\}$ is linked, we have $\mu_1(B)\le\mu_{12}(B)\le\nu_{12}(B)$. Hence $\nu_3(B)<\mu_1(B)$. But then $\nu_{13}(B)\le\nu_3(B)<\mu_1(B)\le\mu_{13}(B)$ and we obtain a contradiction with the fact that $[\mu_1,\nu_1]\cap[\mu_3,\nu_3]=\emptyset$.
\end{proof}

\begin{lemma}\label{T2} The convexity $\C_X$ is $T_2$.
\end{lemma}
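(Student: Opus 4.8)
The plan is to separate two distinct capacities $\nu,\mu\in MX$ by ``threshold'' convex sets, one built from a closed set and one from an open set, matched so that their union is all of $MX$. First I would pick a closed $F\subseteq X$ with $\nu(F)\neq\mu(F)$; since the conclusion is symmetric in the two points I may assume $\mu(F)<\nu(F)$, and I fix $c$ with $\mu(F)<c<\nu(F)$. Applying property 3 of the capacity $\mu$ to the closed set $F$ and the number $c$ gives an open $O\supseteq F$ with $\mu(B)<c$ for every compactum $B\subseteq O$; since $X$ is compact Hausdorff, hence normal, I can choose an open $U$ with $F\subseteq U\subseteq\overline U\subseteq O$ and set $A:=\overline U$. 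Then $A\subseteq O$ is a compactum, so $\mu(A)<c$, while $\nu(U)\ge\nu(F)>c$ because $F$ is a closed subset of $U$; also $A\neq X$ (otherwise $\mu(X)=1<c$) and $A\neq\emptyset$.

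Next I would put $S_1=\{\gamma\in MX\mid\gamma(A)\ge c\}$ and $S_2=\{\gamma\in MX\mid\gamma(U)\le c\}$, both closed in $MX$ as complements of subbasic open sets. The key point is that $S_1,S_2\in\C_X$. For $S_1$: the set $\{\gamma\mid\gamma(A)\ge c\}$ has a least element in the lattice order, namely the capacity $\eta$ with $\eta(X)=1$, $\eta(B)=c$ for $A\subseteq B\neq X$, and $\eta(B)=0$ otherwise; one checks $\eta\in MX$ and, using $\gamma(A)\ge c\iff\gamma\ge\eta$ (monotonicity), that $S_1=[\eta,\mu_{1X}]\in\C_X$. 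For $S_2$: the set $\{\gamma\mid\gamma(U)\le c\}$ has a greatest element, namely the capacity $\theta$ with $\theta(B)=c$ for every nonempty closed $B\subseteq U$ and $\theta(B)=1$ for $B\not\subseteq U$; here the \emph{openness} of $U$ is exactly what yields property 3 for $\theta$ (a compactum inside $U$ has the neighbourhood $U$ on which $\theta$ stays $\le c$), so $\theta\in MX$, and $S_2=[\mu_{0X},\theta]\in\C_X$.

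Then I would verify the three requirements in the definition of a $T_2$ convexity. We have $\mu\notin S_1$ since $\mu(A)<c$, and $\nu\notin S_2$ since $\nu(U)>c$. For $S_1\cup S_2=MX$: if $\gamma\in MX$ and $\gamma\notin S_1$, then $\gamma(A)<c$; every closed $K\subseteq U$ satisfies $K\subseteq A$, so $\gamma(K)\le\gamma(A)<c$, hence $\gamma(U)=\sup\{\gamma(K)\mid K\subseteq U\text{ closed}\}\le c$, i.e. $\gamma\in S_2$. Thus $S_1,S_2$ screen $\nu$ and $\mu$.

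The main obstacle is keeping $S_1$ and $S_2$ inside $\C_X$, i.e. realizing them as order-intervals $[\eta,\mu_{1X}]$ and $[\mu_{0X},\theta]$. The symmetric-looking candidate $\{\gamma\mid\gamma(A)\le c\}$ for a closed set $A$ need not be $\C_X$-convex (its convex hull is strictly larger whenever $A$ has empty interior), which is why the closed set must be used on the ``$\ge c$'' side and the open set on the ``$\le c$'' side; achieving both simultaneously with a single threshold $c$ is exactly what forces the combined use of upper-semicontinuity (to pass from $F$ to a neighbourhood $O$) and normality (to squeeze $U$ and $A=\overline U$ between $F$ and $O$). The only genuinely computational step is checking that the auxiliary capacities $\eta$ and $\theta$ satisfy property 3.
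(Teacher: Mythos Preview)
Your argument is correct and follows the same template as the paper's: pick a closed set on which the two capacities differ, fix a threshold (the paper takes the midpoint $a=\tfrac{\mu(A)+\nu(A)}{2}$), and realise the two half-spaces $\{\alpha:\alpha(\cdot)\ge c\}$ and $\{\alpha:\alpha(\cdot)\le c\}$ as order-intervals $[\eta,\mu_{1X}]$ and $[\mu_{0X},\theta]$.

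The one substantive difference is exactly the point you flag in your last paragraph. The paper uses the \emph{same} closed set $A$ on both sides, defining $\nu_2$ by $\nu_2(C)=a$ for $\emptyset\ne C\subset A$ and $\nu_2(C)=1$ for $C\setminus A\ne\emptyset$, and asserts that $O_2=\{\alpha:\alpha(A)\le a\}=[\mu_{0X},\nu_2]\in\C_X$. But this $\nu_2$ need not satisfy property~3 when $A$ has empty interior: for nonempty closed $C\subset A$ every open $O\supset C$ contains a point $x\notin A$, and then the singleton $\{x\}$ has $\nu_2(\{x\})=1$, so no neighbourhood keeps $\nu_2$ below $1$. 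Consequently $\nu_2\notin MX$ in general, and one can check that in such cases $O_2$ is not an order-interval in $MX$ at all. Your remedy --- using upper-semicontinuity of $\mu$ and normality to produce an open $U$ with $F\subset U\subset\overline U=A\subset O$, and then taking $S_2=\{\gamma:\gamma(U)\le c\}$ --- makes the top capacity $\theta$ genuinely upper-semicontinuous (the open set $U$ itself is the required neighbourhood), while $\overline U=A$ is precisely what forces $S_1\cup S_2=MX$. So your proof is the paper's idea carried out with the necessary extra care, and in fact repairs a gap in the paper's verification that the ``$\le$'' half-space lies in $\C_X$.
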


\begin{proof} Consider any $\mu$, $\nu\in MX$ such that $\mu\neq\nu$. Then there exists a closed subset $A\subset X$ such that $\mu(A)\neq\nu(A)$. We can suppose that $\mu(A)<\nu(A)$. Put $a=\frac{\mu(A)+\nu(A)}{2}$ and consider sets $O_1=\{\alpha\in MX\mid \alpha(A)\ge a\}$ and  $O_2=\{\alpha\in MX\mid \alpha(A)\le a\}$. Consider the capacity $\nu_1\in MX$ defined as follows $\nu_1(C)=0$ if $A\setminus C\neq\emptyset$,  $\nu_1(C)=a$ if $A\subset C$ and $C\neq X$ and $\nu_1(X)=1$ for a closed subset $C\subset X$. Then we have that $O_1=[\nu_1,\mu_{1X}]\in\C_X$.

Analogously, we can consider the capacity $\nu_2\in MX$ defined as follows $\nu_2(\emptyset)=0$  $\nu_2(C)=a$ if $\emptyset\neq C\subset A$  and $\nu_2(C)=1$ if $C\setminus A\neq\emptyset$ for a closed subset $C\subset X$. Then we have that $O_1=[\mu_{0X},\nu_2]\in\C_X$. Obviously, $O_1\cup O_2=MX$ and $\mu\notin O_1$ and $\nu\notin O_2$.
\end{proof}

\section{The main result}

We will  prove  existence of  equilibrium introduced in Section 2, moreover we will show that each $\nu_i$ could be represented as tensor product of capacities on factors.

By a multimap (set-valued map) of a set $X$ into a set $Y$ we mean a map $F:X\to 2^Y$. We use the notation $F:X\multimap Y$. If $X$ and $Y$ are topological spaces, then a multimap $F:X\multimap Y$ is called upper semi-continuous (USC) provided for each open set $O\subset Y$ the set $\{x\in X\mid F(x)\subset O\}$ is open in $X$. It is well-known that a multimap is USC iff its graph is closed in $X\times Y$.

Let  $F:X\multimap X$ be a  multimap. We say that a point $x\in X$ is a fixed point of $F$ if $x\in F(x)$.
The following counterpart of Kakutani theorem for binary convexity was obtained in \cite{RN}).

\begin{theorem}\label{A} Let $\C$ be a  $T_2$ binary convexity on a continuum $X$ and $F:X\multimap X$ is a USC multimap with values in $\C$. Then $F$ has a fixed point.
\end{theorem}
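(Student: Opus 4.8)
The plan is to reduce this multivalued statement to the single-valued fixed point property for such spaces and then to recover the general case by approximating $F$ by continuous single-valued maps, with the non-linearity of $\C$ confined to that approximation step. Throughout we assume, as is implicit in a Kakutani-type statement, that $F$ has nonempty values.

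First I would extract the combinatorial content of binarity. For $a$, $b\in X$ let $[a,b]$ be the smallest $\C$-convex set containing $\{a,b\}$. For any $a$, $b$, $c\in X$ the sets $[a,b]$, $[b,c]$, $[c,a]$ are pairwise linked, so by binarity their intersection is nonempty, and by the $T_2$ axiom it is a single point $m(a,b,c)$. One checks (using that the members of $\C$ are closed and $X$ is compact) that $m\colon X^3\to X$ is continuous, so $(X,m)$ is a compact connected topological median algebra. From the structure theory of binary convexities it follows that $X$ is an absolute retract, and likewise that every $S\in\C$ --- being itself a compact connected $T_2$ binary convex space in the induced convexity --- is an absolute retract; in particular every value $F(x)$ is a contractible compactum.

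Second I would use that every continuous self-map of $X$ has a fixed point (the single-valued case, which may be cited or deduced by a Lefschetz argument from the fact that $X$ is an acyclic absolute retract), and approximate $F$ from outside: for every finite open cover $\U$ of $X$ one builds, using the median operation to form the required ``convex combinations'' --- equivalently, using that $X$ is an absolute retract to retract a nerve of $\U$ into $X$ --- a continuous map $f_{\U}\colon X\to X$ whose graph lies in the $\U\times\U$-neighbourhood of the graph of $F$; convexity of the values of $F$ is precisely what makes this possible. Each $f_{\U}$ has a fixed point $x_{\U}$ by the single-valued case. Directing the covers by refinement and passing to a convergent subnet $x_{\U}\to x_0$, the closedness of the graph of $F$ --- together with the fact that $(x_{\U},x_{\U})$ is eventually arbitrarily close to that graph --- gives $x_0\in F(x_0)$.

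I expect the construction of the approximants $f_{\U}$ to be the main obstacle: with no ambient vector space, the ``convex combinations'' must be produced from the median operation (or from the retraction property) alone, and one must check that the approximation stays subordinate to $F$ on the pieces of the cover. The remaining ingredients --- continuity of $m$, the absolute-retract property of $X$ and of the values, closedness of the graph of $F$, and the passage to the limit --- are routine compactness bookkeeping.
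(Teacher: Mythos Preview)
The paper does not prove this theorem. It is introduced with the sentence ``The following counterpart of Kakutani theorem for binary convexity was obtained in \cite{RN}'' and is simply quoted from the author's preprint~\cite{RN}; no argument appears in the present paper. So there is no ``paper's proof'' to compare your attempt against --- only a citation.

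On its own merits, your outline is a plausible route and is in the spirit of the van~Mill/van~de~Vel theory of binary (normal) convexities and median algebras. The uniqueness of the would-be median really does follow from $T_2$: if $p\neq q$ both lay in $[a,b]\cap[b,c]\cap[c,a]$, separate them by convex halfspaces $H_1,H_2$ with $H_1\cup H_2=X$; by pigeonhole two of $a,b,c$ lie in the same $H_i$, hence the interval between them --- and with it both $p$ and $q$ --- is contained in $H_i$, a contradiction. That said, several steps you label routine are not. Continuity of $m$ does not fall out of ``convex sets are closed and $X$ is compact'' without an argument that the interval map $(a,b)\mapsto[a,b]$ is upper semicontinuous. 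The absolute-retract property for $X$ and for each value $F(x)$ is a substantial theorem that should be cited precisely from \cite{vV}, and it requires connectedness of the values, which you have not checked (elements of $\C$ need not be connected in general; here one must use binarity again). Finally, the construction of the approximants $f_{\U}$ is, as you yourself suspect, the real crux: the usual Cellina/partition-of-unity argument does not transfer verbatim, and one normally bypasses ``median barycenters'' entirely by invoking the AR property of the values to produce $\varepsilon$-selections. Your skeleton is sound, but as written it is a programme rather than a proof.
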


We use definitions and notations from Section 2.

\begin{theorem} There exists $(\mu_1,\dots,\mu_n)\in M(X_1)\times\dots\times M(X_n)$ such that $(\mu_1^*,\dots,\mu_n^*)$ is an   equilibrium under uncertainty with Sugeno payoff, where $\mu_i^*=\otimes_{j\neq i}\mu_j$
\end{theorem}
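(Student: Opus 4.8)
The plan is to reduce the existence of an equilibrium to a fixed-point statement in the product $\prod_{i=1}^n M(X_i)$ and then apply Theorem~\ref{A}. First I would equip each $M(X_i)$ with the binary convexity $\C_{X_i}$ from Section~3, which is $T_2$ (Lemma~\ref{T2}) and binary (Lemma~\ref{Bin}). The natural candidate for the ambient convex space is the product $Y=\prod_{i=1}^n M(X_i)$ with the product convexity $\C$ whose convex sets are products $\prod_i S_i$ with $S_i\in\C_{X_i}$; one checks that a finite product of $T_2$ binary convexities on continua is again a $T_2$ binary convexity on the continuum $Y$ (binarity of finite products of binary convexities is standard, and the $T_2$ and continuum properties pass to finite products), so that Theorem~\ref{A} applies to multimaps $Y\multimap Y$ with $\C$-convex values.

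Next I would define the relevant multimap. For $(\mu_1,\dots,\mu_n)\in Y$ set $\mu_i^*=\otimes_{j\ne i}\mu_j\in M(X_{-i})$, let $R_i=R_i(\mu_i^*)\subset X_i$ be the (nonempty, compact) best-response set of player $i$ given belief $\mu_i^*$, as in Section~2, and put
$$F_i(\mu_1,\dots,\mu_n)=\{\mu\in M(X_i)\mid \mu(X_i\setminus R_i)=0\},\qquad F=\prod_{i=1}^n F_i:Y\multimap Y.$$
I claim each $F_i$ has values that are $\C_{X_i}$-convex: indeed, for a closed set $R\subset X_i$ the set $\{\mu\mid\mu(X_i\setminus R)=0\}$ equals the order interval $[\mu_{0X_i},\nu_R]$, where $\nu_R(C)=1$ if $C\setminus R\ne\emptyset$, $\nu_R(C)=0$ if $\emptyset\ne C\subset R$ and $\nu_R(\emptyset)=0$ — this is exactly the kind of explicit capacity used in the proof of Lemma~\ref{T2}, so the value is an element of $\C_{X_i}$ and hence $F(\mu)$ is $\C$-convex. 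The value is also nonempty (e.g. the Dirac capacity at any point of $R_i$ lies in it).

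The main obstacle is the upper semicontinuity of $F$, equivalently (since values are closed) the closedness of its graph. This splits into two parts. First, the map $(\mu_1,\dots,\mu_n)\mapsto R_i$ is itself USC: this follows from continuity of $P_i$ (Lemma~\ref{C}) together with continuity of the tensor product $\otimes:\prod_{j\ne i}M(X_j)\to M(X_{-i})$ — the set $\{(x,\mu_i^*):P_i(x,\mu_i^*)=\max_z P_i(z,\mu_i^*)\}$ is closed because the ``$\max$'' is a continuous function of $\mu_i^*$ by a standard compactness argument, so the argmin correspondence has closed graph. Second, I must show that if $\mu^{(k)}\to\mu$ in $Y$, $R_i^{(k)}$ are the corresponding best-response sets with $\mu_i^{(k)}(X_i\setminus R_i^{(k)})=0$, and $R_i^{(k)}\to$ some closed $R$ in the sense of USC-limits with $R\subset R_i$ (the limit best-response set), then $\mu_i(X_i\setminus R_i)=0$. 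Here I would use upper semicontinuity of the capacities together with property~3 of capacities: given $\e>0$, by USC of the correspondence $R_i$ there is a neighborhood $O\supset R_i$ with $R_i^{(k)}\subset O$ for large $k$; then $\mu_i^{(k)}(X_i\setminus O)=0$, and passing to the limit in the subbasic closed condition $O_-(X_i\setminus O,\e)$ gives $\mu_i(X_i\setminus O)<\e$, whence $\mu_i(X_i\setminus R_i)\le\mu_i(X_i\setminus O)\le\e$ for every $\e$, so it is $0$.

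Finally, Theorem~\ref{A} yields a fixed point $(\mu_1,\dots,\mu_n)\in F(\mu_1,\dots,\mu_n)$, i.e. $\mu_i(X_i\setminus R_i)=0$ for all $i$. It remains to deduce that $(\mu_1^*,\dots,\mu_n^*)$ is an equilibrium under uncertainty with Sugeno payoff, i.e. $\mu_i^*(X_{-i}\setminus\prod_{j\ne i}R_j)=0$. This is precisely Lemma~\ref{P} applied with $A_j=R_j$ (closed in $X_j$) and the capacities $\mu_j$ satisfying $\mu_j(X_j\setminus R_j)=0$: it gives $\otimes_{j\ne i}\mu_j\big(\prod_{j\ne i}X_j\setminus\prod_{j\ne i}R_j\big)=0$, which is the required condition. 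This completes the proof.
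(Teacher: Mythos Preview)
Your overall strategy coincides with the paper's: define the best-response multimap $F=\prod_i F_i$ on $\prod_i M(X_i)$, show its values are order intervals and hence lie in the product convexity $\C$, verify upper semicontinuity, apply Theorem~\ref{A}, and conclude via Lemma~\ref{P}. Two concrete points in your write-up need repair.

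First, your capacity $\nu_R$ is written backwards. With $\nu_R(C)=1$ when $C\setminus R\ne\emptyset$ and $\nu_R(C)=0$ for $\emptyset\ne C\subset R$, the interval $[\mu_{0X_i},\nu_R]$ forces $\mu(C)=0$ for every closed $C\subset R$, i.e.\ it is the set of capacities vanishing \emph{on} $R$, not on $X_i\setminus R$; and if $R=X_i$ then $\nu_R(X_i)=0$, so $\nu_R$ is not even a capacity. The correct upper endpoint (the one the paper uses) is $\nu(C)=1$ if $C\cap R\ne\emptyset$ and $\nu(C)=0$ otherwise; then $\{\mu:\mu(X_i\setminus R)=0\}=[\mu_{0X_i},\nu]$.

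Second, your closed-graph step does not go through as stated. You write ``passing to the limit in the subbasic closed condition $O_-(X_i\setminus O,\e)$'', but $O_-(F,a)=\{c:c(F)<a\}$ is subbasic \emph{open}, not closed; for a closed set $F$ the evaluation $c\mapsto c(F)$ is only upper semicontinuous, so a net with $c_k(F)=0$ may converge to $c$ with $c(F)>0$ (take $\delta_{1/k}\to\delta_0$ on $[0,1]$ with $F=\{0\}$). The fix is to use that for an \emph{open} set $V$ the set $\{c:c(V)=0\}$ is closed (its complement is the subbasic open $O_+(V,0)$): given closed $K\subset X_i\setminus R_i$, choose by normality an open $V$ with $K\subset V\subset\overline V\subset X_i\setminus R_i$; upper semicontinuity of the best-response correspondence gives $R_i^{(k)}\subset X_i\setminus\overline V$ eventually, hence the image capacities satisfy $\beta_i^{(k)}(V)=0$, and since this condition is closed the limit $\lambda_i$ also satisfies $\lambda_i(V)=0$, whence $\lambda_i(K)=0$. (The paper argues the complement of the graph is open rather than working with nets, but the same open-versus-closed distinction is the crux there as well.)
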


\begin{proof}  For each $i\in\{1,\dots,n\}$ consider a multimap $\gamma_i:\prod_{j=1}^nM(X_j)\multimap M(X_i)$ defined as follows $\gamma_i(\mu_1,\dots\mu_n)=\{\mu\in M(X_i)\mid \mu(X_i\setminus R_i(\mu_i^*))=0\}$. It follows from the definition of topology on $M(X_i)$ that $\gamma_i(\mu_1,\dots\mu_n)$ is a closed subset of $M(X_i)$ for each  $(\mu_1,\dots\mu_n)\in \prod_{j=1}^nM(X_j)$. Consider $\nu\in M(X_i)$ defined as follows $\nu(A)=1$ if $A\cap R(\mu_i^*)\neq\emptyset$ and $\nu(A)=0$ otherwise.
Then we have  $\gamma_i(\mu_1,\dots\mu_n)=[\mu_{X_i0},\nu]$, hence $\gamma_i(\mu_1,\dots\mu_n)\in\C_{X_i}$.

Define a multimap $\gamma:\prod_{j=1}^nM(X_j)\multimap \prod_{j=1}^nM(X_j)$ by the formulae $\gamma(\mu_1,\dots\mu_n)=\prod_{i=1}^n\gamma_i(\mu_1,\dots\mu_n)$. Let us show that $\gamma$ is USC. Consider any pair $(\mu,\nu)\in\prod_{j=1}^nM(X_j)\times\prod_{j=1}^nM(X_j)$ such that $\nu\notin \gamma(\mu)$. Then there exists $i\in\{1,\dots,n\}$ and a compactum $K\subset X_i\setminus R_i(\mu_i^*)$ such that $\nu_i(K)>0$. Put $O_\nu\{\alpha\in \prod_{j=1}^nM(X_j)\mid \alpha_i(K)>0\}$. Then $O_\nu$ is an open neighborhood of $\nu$. It follows from Lemma  \ref{C} and continuity of tensor product that there exists an open neighborhood $O_\mu$ of $\mu$ such that for each $\alpha\in O_\mu$ we have $R(\alpha_i^*)\cap K=\emptyset$. Hence for each $(\alpha,\beta)\in O_\mu\times O_\nu$ we have $\beta\notin\gamma(\alpha)$ and $\gamma$ is USC.

We consider on $\prod_{j=1}^nM(X_j)$ the family $\C=\{\prod_{i=1}^n C_i\mid C_i\in\C_{X_i}\}$. It is easy to see that $\C$ forms a $T_2$ binary convexity on a continuum $\prod_{j=1}^nM(X_j)$ (let us remark that each $M(X_j)$ is connected).
Then by Theorem \ref{A} $\gamma$ has a fixed point $\mu=(\mu_1,\dots\mu_n)\in \prod_{j=1}^nM(X_j)$. Let us show that $(\mu_1^*,\dots,\mu_n^*)$ is an   equilibrium under uncertainty. Consider any $i\in\{1,\dots,n\}$. Then $\mu_i(X_i\setminus R_i(\mu_i^*))=0$. We have by Lemma \ref{P} $\mu_i^*(\prod_{j\ne i}X_i\setminus\prod_{j\ne i}R_j(\mu_j^*))=0$.
\end{proof}

\begin{remark} Many results of our could be deduced from general results obtained in \cite{RN} but we give direct (not difficult) proofs here because otherwise it would require introducing additional categorical notions.
\end{remark}

\end{document}